\newtheorem{thm}{Theorem} 
\newtheorem{prop}[thm]{Proposition}
\newtheorem{lem}[thm]{Lemma}
\theoremstyle{definition}
\newtheorem{dfn}[thm]{Definition}
\newcommand{\R}{\mathbb{R}}
\newcommand{\N}{\mathbb{N}}
\renewcommand{\le}{\leqslant}
\renewcommand{\ge}{\geqslant}
\renewcommand{\>}{\rangle}
\newcommand{\rest}[1]{\!\mid_{#1}}
\newcommand{\zz}{\tiny $0|0$}
\newcommand{\zo}{\tiny $0|1$}
\newcommand{\oz}{\tiny $1|0$}
\newcommand{\oo}{\tiny $1|1$}
\DeclareMathOperator{\Aut}{Aut}
\begin{document}

\title{Some non-contracting automata groups}
\author{Nick Davis}
 
\address{Department of Mathematics and Statistics\\
University of Melbourne\\
Parkville VIC 3010 \\
Australia
}
\email{nickd@unimelb.edu.au} 

\author{Murray Elder}

\address{School of Mathematical and Physical Sciences\\
The University of Newcastle\\
Callaghan NSW 2308\\
Australia
}
\email{murrayelder@gmail.com}

\author{Lawrence Reeves}
\address{Department of Mathematics and Statistics\\
University of Melbourne\\
Parkville VIC 3010 \\
Australia
}
\email{lreeves@unimelb.edu.au} 

\thanks{The first author was supported by an Australian Postgraduate Award. The second and third authors
 are supported by
Australian Research Council grants  FT110100178 and DP1096912.
}

\subjclass[2010]{20F65}
\date{13 November 2013}
\keywords{automata groups, self-similar groups, contracting actions}

\begin{abstract} We add to the classification of groups generated by 3-state automata over a 2 letter alphabet given 
by Bondarenko \textit{et al.}~\cite{BGKMNSS}, by showing that a number of the groups in the classification are non-contracting. 
We show that the criterion we use to prove a self-similar action is non-contracting also implies that the associated 
self-similarity graph introduced by Nekrashevych is non-hyperbolic.
\end{abstract}

\maketitle

\section{Introduction}

In \cite{BGKMNSS} a list of automata groups
 generated by 3-state
automata over a 2-letter alphabet is given and a great deal of information is listed for each.
Amongst the data given for each group was whether the group was contracting or non-contracting.
For  ten automata  the classification did not determine whether or not the group was
 contracting.
In the numbering system of  \cite[page 14]{BGKMNSS}  the ten automata are:
$$
749, 861, 882, 887, 920, 969, 2361, 2365, 2402, 2427.
$$

Later Muntyan \cite{M} showed that three of these are isomorphic to other groups in the classification, 
specifically $920\cong   2401$, $2361\cong 939$, and  $2365\cong 939$, where the groups $939$ and 
$2401$ are listed as non-contracting in  \cite{BGKMNSS}.

The purpose of this note is to show that all of the automata groups listed above are  non-contracting.
We first establish a criterion for a group to be non-contracting, and then apply it in each case.

We refer to \cite{N} for the basic definitions of self-similar actions and automata groups. 
Automata groups are examples of self-similar actions.
The automata to be considered all have three states, labelled $a$ $b$ and $c$, and a two letter 
alphabet $X=\{0,1\}$.
The automata are represented by a Moore diagram, which is given below for each automaton.
Each state defines an element in $\Aut(X^\omega)$ and the group defined by the automaton is
$G=\<a,b,c\>\subset\Aut(X^\omega)$. For $g\in G$ and a finite word $v\in X^*$, the restriction of 
$g$ to $v$ is denoted $g\rest{v}$. The basic properties of the action that we will make use of are:
$$
(gh)\rest{v}=g\rest{h(v)}h\rest{v} \quad g({uv})=g(u)g\rest{u}(v) \quad g\rest{uv}=(g\rest{u})\rest{v}
$$

Recall the following definition from \cite{N}, 
\begin{dfn}
A self-similar action $G\le \Aut(X^\omega)$ is called \emph{contracting}
if there exists a finite subset $\mathcal{N}\subseteq G$ such that for all $g\in G$ there exists $k\in\N$ such that
$g\rest{v}\in\mathcal{N}$ for all $v\in X^{*}$ with $\ell(v)\ge k$.
The minimum such $\mathcal{N}$ is called the \emph{nucleus} of the action.
\end{dfn}


We make use of the following criterion, which was used 
 in \cite{BGKMNSS}. 
For example,  it is used to show that 744 is non-contracting, and many times after that.

\begin{lem}\label{lem:criterion}
Let $G\le \Aut(X^\omega)$ be a self-similar action.
Suppose that there exist $g\in G$ and   $v\in X^*$ such that 
\begin{enumerate}
\item $g\rest{v}=g$
\item $g(v)=v$
\item $g$ has infinite order
\end{enumerate}
Then $G$ is non-contracting.
\end{lem}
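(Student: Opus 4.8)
The plan is a proof by contradiction. Suppose $G$ were contracting with finite nucleus $\mathcal{N}$; I will show that hypotheses (1)--(3) force every power $g^m$ to lie in $\mathcal{N}$, which is impossible once $g$ has infinite order.

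First I would examine how the hypotheses propagate under taking powers and passing to deeper restrictions. Write $v^n$ for the concatenation of $n$ copies of $v$. From $g(v)=v$ together with $g\rest v=g$ one gets $g(v^n)=v^n$ for all $n$ by induction, using $g(uw)=g(u)\,g\rest u(w)$; in particular $g^m(v)=v$ for every $m\ge 1$. Feeding this into the identity $(gh)\rest v=g\rest{h(v)}\,h\rest v$ with $h=g^{m-1}$ gives $(g^m)\rest v = g\rest{g^{m-1}(v)}\,(g^{m-1})\rest v = g\,(g^{m-1})\rest v$, so $(g^m)\rest v = g^m$ by induction on $m$. Applying $g\rest{uw}=(g\rest u)\rest w$ repeatedly then upgrades this to $(g^m)\rest{v^n}=g^m$ for all $m,n\ge 1$. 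This is the technical core, and it is precisely here that condition (2) --- and not merely condition (1) --- is used: the factor $g\rest{g^{m-1}(v)}$ collapses to $g\rest v$ only because $g^{m-1}$ fixes $v$.

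To conclude, fix $m\ge 1$ and apply the definition of contracting to the element $g^m$: there is $k\in\N$ with $(g^m)\rest w\in\mathcal{N}$ whenever $\ell(w)\ge k$. Choosing $n$ with $\ell(v^n)=n\,\ell(v)\ge k$ and setting $w=v^n$, the identity $(g^m)\rest{v^n}=g^m$ gives $g^m=(g^m)\rest{v^n}\in\mathcal{N}$. Since $m$ was arbitrary, $\{g^m:m\ge 1\}\subseteq\mathcal{N}$, which is infinite because $g$ has infinite order, contradicting finiteness of $\mathcal{N}$. Hence $G$ is non-contracting.

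I expect the only genuine obstacle to be bookkeeping: keeping the three action/restriction identities straight and ordering the two inductions correctly (first on the exponent $m$ to obtain $(g^m)\rest v=g^m$, then on the power $n$ to obtain $(g^m)\rest{v^n}=g^m$). There is also a harmless implicit hypothesis, satisfied in every application below, that $v$ be nonempty, so that $\ell(v)\ge 1$ and $\ell(v^n)$ can be made arbitrarily large; without it the statement fails, since the binary odometer satisfies (1) and (3) with $v$ the empty word yet is contracting.
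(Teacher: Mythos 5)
Your argument is correct and is essentially the paper's own proof: the same two inductions (on the power of $g$ and on the number of copies of $v$, merely in the opposite order) yield $(g^m)\rest{v^n}=g^m$, which forces every power of $g$ into the nucleus and contradicts its finiteness since $g$ has infinite order. Your closing remark that $v$ must be nonempty (otherwise the adding machine with $v$ the empty word is a counterexample) is a fair point left implicit in the paper; every application in Section~\ref{sec:automata} uses a nonempty $v$.
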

\begin{proof}
Assume for induction that $g\rest{v^k}=g$ and $g(v^k)=v^k$ for $k\geq 1$. Then
 $g\rest{v^{k+1}}=g\rest{v^kv}=(g\rest{v^k})\rest{v}=g\rest{v}=g$ and $g(v^{k+1})=g(v)g\rest{v}(v^k)=vg(v^k)=vv^k$.
 
 Next assume for induction that
$g^n\rest{v^k}=g^n$ for $n\geq 1$ and fixed $k$.
Then $g^{n+1}\rest{v^k}=g\rest{g^n(v^k)}g^n\rest{v^k}=
g\rest{v^k}g^n=gg^n$.

It follows that a nucleus must contain $g^n$ for infinitely many $n$ and so, since $g$ has infinite order, the action is not contracting.

Alternatively, though less directly, the lemma follows from Proposition \ref{lem:nonhyp} below and Theorem 3.8.6 of  \cite{N}.
\end{proof}

In the next section we apply this criterion to the ten automata listed above. In Section \ref{sec:hyperbolic} we prove that a self-similar group satisfying this criterion has a non-hyperbolic self-similarity graph. 

\section{The automata}\label{sec:automata}

For each of the ten automata not listed as contracting or non-contracting in \cite{BGKMNSS} we give an element $g\in G$ and
a word $v\in\{0,1\}^*$ with  the (easily verifiable) property that $g(v)=v$ and $g\rest{v}=g$. 
The Moore diagram of the automaton is given for reference. Active states are shaded in the diagram.
Then an argument is given to prove that
$g$ has infinite order. The criterion of the lemma above then applies.
The approach to showing that $g$ has infinite order is to
 find another string $v'$ that is not fixed by any power of $g$. 
 We found the candidates for suitable elements and strings using some simple computer code and observing various patterns.

 It is convenient to 
introduce the equivalence relation on $\{0,1\}^\omega$ given by left shift equivalence, that is,
$u\sim v$ if  there are finite prefixes $u'$ and $v'$ of $u$ and $v$ respectively, and
 $w\in \{0,1\}^\omega$ such that $u=u'w$ and $v=v'w$.

For a finite word $u\in\{0,1\}^*$, we denote by $u^\infty$ the element of $\{0,1\}^\omega$ formed
by repeating $u$ infinitely many times.

\subsection{Automaton 749}

\mbox{}

\begin{minipage}{0.5\textwidth}
\begin{gather*}a^2bc(0100)=0100 \\
 (a^2bc)\rest{0100}=a^2bc
\end{gather*}
\end{minipage}
\parbox{0.4\textwidth}{
\tikzset{every state/.style={minimum size=2em}} 
\begin{tikzpicture}[->,>=stealth',shorten >=1pt,auto,node distance=3.2cm,
                    semithick]

  \node[state, fill=gray!40] (A)                    {$a$};
  \node[state]         (C) [above right of=A] {$c$};
  \node[state]         (B) [below right of=C] {$b$};

  \path (A) edge  [loop left] node {\oz} (A)           
                 edge  [bend left=10]            node {\zo} (B)
        (B) edge  [bend left=10] node    {\oo} (A)
            edge              node [swap] {\zz} (C)
         (C) edge [bend right=10] node [swap] {\zz} (A)
           (C) edge [bend left=10] node {\oo} (A);
        \end{tikzpicture}
}
\bigskip

To see that $g=a^2bc$ has infinite order we consider the string $0^\infty$.
Observe that since 
$g\rest{000}=babc$, $babc(000)=101$, {and}
$(babc)\rest{000}=babc$,
we have
$g(0^\infty)=001(101)^\infty$.
Then note that
$
a\rest{101}=b\rest{101}=c\rest{101}=a$  {and}  $a^4(101)=101$.
It follows that for any $n\ge 1$, $g^n(0^\infty)=u_n(101)^\infty$
where $u_1=001$ and $u_{n}=g(u_{n-1}101)$.
In other words $g^n(0^\infty)$ is left-shift equivalent to $(101)^\infty$.
We now note that $g^{-1}(0^\infty)$ is not of this form, which 
establishes that $g$ has infinite order.
Observe that
\begin{gather*}
g^{-1}\rest{0000}=a^{-1}b^{-1}a^{-2}  \quad g^{-1}(0000)=0011\\
a^{-1}b^{-1}a^{-2}\rest{0000}=a^{-1}b^{-1}a^{-2}   \quad     a^{-1}b^{-1}a^{-2}(0000)=1011
\end{gather*}
Therefore
$g^{-1}(0^\infty)=0011(1011)^\infty$.

\subsection{Automaton 861}

\mbox{}

\begin{minipage}{0.5\textwidth}
\begin{gather*}
c(010)=010\\ c\rest{010}=c
\end{gather*}

\end{minipage}
\parbox{0.4\textwidth}{
\tikzset{every state/.style={minimum size=2em}} 
\begin{tikzpicture}[->,>=stealth',shorten >=1pt,auto,node distance=3.2cm,
                    semithick]

  \node[state, fill=gray!40] (A)                    {$a$};
  \node[state]         (C) [above right of=A] {$c$};
  \node[state]         (B) [below right of=C] {$b$};

  \path (A) edge  [bend left=10] node {\zo} (C)           
                 edge             node {\oz} (B)
        (B) edge  [loop right] node    {\oo} (B)
            edge     [bend right=10]         node [swap] {\zz} (C)
         (C) edge [bend right=10] node [swap] {\zz} (B)
           (C) edge [bend left=10] node {\oo} (A);
        \end{tikzpicture}
}
\bigskip


Since $x\rest{11}=b$ for any $x\in\{a,b,c\}$ and
$b(1^\infty)=1^\infty$, it follows that $c^n(1^\infty)\sim 1^\infty$ for any $n\ge 0$.
But $c^{-1}(1^\infty)=(10)^\infty$, so $c$ has  infinite order.

\subsection{Automaton 882}
\mbox{}

\begin{minipage}{0.5\textwidth}
\begin{gather*}
acacbc(11)=11 \\ (acacbc)\rest{11}=acacbc
\end{gather*}
\end{minipage}
\parbox{0.4\textwidth}{
\tikzset{every state/.style={minimum size=2em}} 
\begin{tikzpicture}[->,>=stealth',shorten >=1pt,auto,node distance=3.2cm,
                    semithick]

  \node[state, fill=gray!40] (A)                    {$a$};
  \node[state]         (C) [above right of=A] {$c$};
  \node[state]         (B) [below right of=C] {$b$};

  \path (A) edge  [bend left=20] node {\oz, \zo} (C)           
                 edge  [bend left=10]            node {} (C)
        (B) edge  [loop right] node    {\zz} (B)
            edge     [bend left=10]         node  {\oo} (C)
         (C) edge [bend left=10] node  {\zz} (B)
           (C) edge [bend left=10] node {\oo} (A);
        \end{tikzpicture}
}
\bigskip

To show that $g=acacbc$ has infinite order we use the following lemma to
conclude that   $g^{2^n}(0^\infty)=0^{2n+1}110^\infty$ for all $n\le 1$.

\begin{lem}\mbox{}
\begin{enumerate}
\item $g^{2^n}(0^{2n+1})=0^{2n+1}$
\item $g^{2^n}\rest{0^{2n+1}}=cacb$
\item $cacb(0^\infty)=110^{\infty}$
\end{enumerate}
\end{lem}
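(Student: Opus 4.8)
The plan is to verify the three claims by direct computation in the automaton, handling the first two together by induction on $n$ and treating the third as a standalone finite computation. Throughout, write $g = acacbc$, and recall the basic identities $g\rest{uv} = (g\rest u)\rest v$, $(gh)\rest v = g\rest{h(v)}\,h\rest v$, and $g(uv) = g(u)\,g\rest u(v)$.

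For the base case $n=0$, I would directly compute that $g(0) = 0$ and $g\rest 0 = cacb$: reading off the Moore diagram of automaton 882, each of the six letters of $g$ acts on the single letter $0$ and passes to a specific restriction, and composing these six single-letter transitions gives the claim. (Indeed $cacb(0^\infty)=110^\infty$ from part~(3) already tells us $cacb(00)=00$, which is consistent.) For the inductive step, suppose $g^{2^n}(0^{2n+1}) = 0^{2n+1}$ and $g^{2^n}\rest{0^{2n+1}} = cacb$. Then $g^{2^{n+1}}\rest{0^{2n+1}} = (g^{2^n}\cdot g^{2^n})\rest{0^{2n+1}} = g^{2^n}\rest{g^{2^n}(0^{2n+1})}\,g^{2^n}\rest{0^{2n+1}} = g^{2^n}\rest{0^{2n+1}}\,g^{2^n}\rest{0^{2n+1}} = (cacb)^2$. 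So the task reduces to understanding how $(cacb)^2$ acts on the remaining two letters $00$ (to go from length $2n+1$ up to length $2n+3$) and what its restriction to $00$ is. From part~(3), $cacb$ fixes the prefix $00$ of $0^\infty$, with $cacb(0^\infty)=110^\infty$ forcing $cacb(00)=00$ and $cacb\rest{00}$ equal to whatever sends $0^\infty \mapsto 110^\infty$; call this element $h$, so $h(0^\infty) = 110^\infty$. Then I need: $(cacb)^2\rest{00}$, applied appropriately, yields back $cacb$ on the next block, and the displacement $110$ appears two symbols further along. This is the computational heart of the induction.

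The cleanest way to organize this is to track, for each $m \ge 0$, both $g^{2^n}\rest{0^m}$ and the image $g^{2^n}(0^m)$, and observe the pattern that for $m \le 2n+1$ the string is fixed with terminal restriction $cacb$, while reading two more zeros triggers the $11$ displacement — exactly mirroring the computation in Section~2.1 for automaton 749. Concretely, once part~(3) is established, $g^{2^n}(0^\infty) = g^{2^n}(0^{2n+1})\,\big(g^{2^n}\rest{0^{2n+1}}\big)(0^\infty) = 0^{2n+1}\,cacb(0^\infty) = 0^{2n+1}110^\infty = 0^{2n+1}110^\infty$, which is the displayed formula the lemma is invoked to prove. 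So parts~(1) and~(2) are really the engine, and part~(3) is the ignition: part~(3) alone is a finite orbit computation — apply $cacb$ letter-by-letter along $0^\infty$, noting each of $c,a,c,b$ has restriction in $\{a,b,c\}$ on $0$ and tracking the at most a few symbols that change before the tail stabilizes.

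The main obstacle I anticipate is bookkeeping in the inductive step: passing from $g^{2^n}\rest{0^{2n+1}} = cacb$ to $g^{2^{n+1}}\rest{0^{2n+3}} = cacb$ requires knowing not just $cacb\rest{00}$ but enough of the restriction structure of $cacb$ (equivalently of $a,b,c$ on short strings of zeros) to see that composing two copies and pushing through two more zeros returns to $cacb$. I would resolve this by computing the finite table of restrictions $\{a,b,c\}\rest{0^j}$ for small $j$ directly from the Moore diagram, then mechanically composing; the claimed pattern $g^{2^n}(0^{2n+1}) = 0^{2n+1}$ with restriction $cacb$ should then drop out as a self-reinforcing identity, in the same spirit as the $g\rest v = g$ relations recorded at the head of each subsection. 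Once the lemma is proved, Lemma~\ref{lem:criterion} applies with $g = acacbc$, $v = 11$, since $g^{2^n}(0^\infty)$ is not eventually periodic in a way compatible with finite order — in particular $g^{2^n}(0^\infty) = 0^{2n+1}110^\infty$ takes distinct values for distinct $n$, so $g$ has infinite order.
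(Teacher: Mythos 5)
Your overall skeleton matches the paper's proof (part (3) by a direct finite computation; parts (1)--(2) by induction on $n$, using $b^2=1$ and the restriction rules), and your formal reduction of the inductive step to showing $(cacb)^2(00)=00$ and $\bigl((cacb)^2\bigr)\rest{00}=cacb$ is the right move. However, the explanation you give for why this works rests on a false claim: you assert twice that part (3) ``forces'' $cacb(00)=00$. It forces the opposite. Since $cacb(0^\infty)=110^\infty$, the image of the length-two prefix is the length-two prefix of the image, so $cacb(00)=11$ (and one checks similarly $cacb(11)=00$, $(cacb)\rest{00}=cbbb=cb$, $(cacb)\rest{11}=bbca=ca$). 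Your picture of ``$cacb$ fixes the prefix $00$ and the displacement $110$ appears two symbols further along'' is therefore wrong, and it is not a harmless slip: the entire reason the exponent doubles from $2^n$ to $2^{n+1}$ in the lemma is that $cacb$ swaps $00$ and $11$, so a single application of $g^{2^n}$ sends $0^{2n+3}$ to $0^{2n+1}11$ and only the second application returns it to $0^{2n+3}$. Under your reading, $g^{2^n}$ itself would already fix $0^{2n+3}$ with restriction conjured from a nonexistent fixed prefix, and the induction as you describe it would not close.

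The repair is exactly the computation the paper does (and which your proposed ``mechanical table of restrictions'' would produce if carried out): from $g^{2^{n+1}}\rest{0^{2n+1}}=(cacb)^2$ one gets
\begin{align*}
(cacb)^2(00)&=cacb(11)=00,\\
\bigl((cacb)^2\bigr)\rest{00}&=(cacb)\rest{cacb(00)}\,(cacb)\rest{00}=(cacb)\rest{11}\,(cacb)\rest{00}=ca\cdot cb=cacb,
\end{align*}
which is precisely the paper's step $(cacb)\rest{11}(cacb)\rest{00}=bbcacbbb=cacb$, and likewise $g^{2^{n+1}}(0^{2n+3})=g^{2^n}(0^{2n+1}11)=0^{2n+1}cacb(11)=0^{2n+3}$. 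With that correction your argument coincides with the paper's; the concluding consequence you state, $g^{2^n}(0^\infty)=0^{2n+1}110^\infty$ (hence distinct powers of $g$ act differently and $g$ has infinite order, feeding Lemma~\ref{lem:criterion} with $v=11$), is fine but is the application of the lemma rather than part of it.
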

\begin{proof}
For the third part
\begin{align*}
cacb(0^\infty)&=cacb(00)(cacb)\rest{00}(0^\infty)=11cbbb(0^\infty)\\
&=11cb(0^\infty)=11cb(0)(cb)\rest{0}(0^\infty)=110b^2(0^\infty)=110^\infty
\end{align*}

We prove the first and second by induction on $n$. 
Note first that $b^2$ is the identity in the group, as can be seen from the automaton for $b^2$.
\smallskip

\begin{center}
\tikzset{every state/.style={minimum size=2em}} 
\begin{tikzpicture}[->,>=stealth',shorten >=1pt,auto,node distance=2.2cm,
                    semithick]

  \node[state] (A)                    {$a^2$};
  \node[state]         (C) [left of=A] {$c^2$};
  \node[state]         (B) [left  of=C] {$b^2$};

  \path (A) edge  [bend left=20] node {\oo, \zz} (C)           
                 edge  [bend left=10]            node {} (C)
        (B) edge  [loop left] node    {\zz} (B)
            edge     [bend left=10]         node  {\oo} (C)
         (C) edge [bend left=10] node  {\zz} (B)
           (C) edge [bend left=10] node {\oo} (A);
        \end{tikzpicture}
\end{center}

We have $g(0)=0$ and $g\rest{0}=cacbbb=cacb$.
Then inductively,
\begin{align*}
g^{2^{n+1}}(0^{2^n+3})&=g^{2^n}g^{2^n}(0^{2^{2n+1}}00)\\
&= g^{2^n}(0^{2n+1}cacb(00))= g^{2^n}(0^{2n+1}11)\\
&=0^{2n+1}cacb(11)=0^{2n+1}00\\
g^{2^{n+1}}\rest{0^{2^n+3}}&=(g^{2^n}g^{2^n})\rest{0^{2^n+3}}= g^{2^n}\rest{g^{2^n}(0^{2n+3})}g^{2^n}\rest{0^{2^n+3}}\\
&= g^{2^n}\rest{0^{2n+1}11}g^{2^n}\rest{0^{2^n+3}}
= (g^{2^n}\rest{0^{2n+1}})\rest{11}(g^{2^n}\rest{0^{2^n+1}})\rest{00}\\
&=(cacb)\rest{11}(cacb)\rest{00}=bbcacbbb=cacb
\end{align*}

\end{proof}

\subsection{Automaton 887}
\mbox{}

\begin{minipage}{0.5\textwidth}
\begin{gather*}
bc(00)=00\\(bc)\rest{00}=bc
\end{gather*}
\end{minipage}
\parbox{0.4\textwidth}{
\tikzset{every state/.style={minimum size=2em}} 
\begin{tikzpicture}[->,>=stealth',shorten >=1pt,auto,node distance=3.2cm,
                    semithick]

  \node[state, fill=gray!40] (A)                    {$a$};
  \node[state]         (C) [above right of=A] {$c$};
  \node[state]         (B) [below right of=C] {$b$};

  \path (A) edge  [bend left=10] node {\zo} (B)           
                 edge  [bend right=10]            node [swap] {\oz} (B)
        (B) edge  [bend right=10] node    {} (C)
            edge     [bend right=20]         node [swap]  {\zz,\oo} (C)
         (C) edge [bend right=10] node [swap]  {\zz} (B)
           (C) edge [bend right=10] node {\oo} (A);
        \end{tikzpicture}
}
\bigskip

To establish that $bc$ has infinite order we prove  the following.

\begin{lem}
For all $n\ge 1$, $(bc)^n(1^\infty)\neq1^\infty$.
\end{lem}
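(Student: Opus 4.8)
The plan is to read the sections of the three generators off the Moore diagram, reduce the statement to a self-referential relation for $g=bc$ along the ray $1^\infty$, and finish by a strong induction on $n$.

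Reading the diagram, $a$ is the active state with $a\rest{0}=a\rest{1}=b$, while $b$ and $c$ are inactive at the root with $b\rest{0}=b\rest{1}=c$, $c\rest{0}=b$ and $c\rest{1}=a$ (throughout I use the convention $(gh)(u)=g(h(u))$ implicit in the excerpt). A preliminary fact I would record first is that $a^2=b^2=c^2=e$: the set $\{a^2,b^2,c^2\}$ is closed under taking sections --- one checks $a^2\rest{x}=b^2$, $b^2\rest{x}=c^2$, $c^2\rest{0}=b^2$ and $c^2\rest{1}=a^2$ for $x\in\{0,1\}$ --- and every element of this set is inactive at the root, so each of $a^2,b^2,c^2$ fixes every finite word and is therefore trivial. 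Using $b^2=e$ one then computes the sections relevant to $1^\infty$: the element $bc$ is inactive at the root with $(bc)\rest{1}=ca$; $ca$ is active at the root, with $ca(1w)=0\cdot b^2(w)=0w$ and $(ca)\rest{0}=ab$, so $(ca)^2$ is inactive with $(ca)^2\rest{0}=(ca)^2\rest{1}=ab$; and $ab$ is active at the root with $(ab)\rest{0}=(ab)\rest{1}=bc$, so $(ab)^2$ is inactive with $(ab)^2\rest{0}=(ab)^2\rest{1}=(bc)^2$. The upshot is the cycle $bc\leadsto ca\leadsto ab\leadsto bc$, which is exactly what lets the induction close.

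Now I would argue by strong induction on $n\ge1$, with $g=bc$. Since $g$ is inactive at the root, $g^n(1^\infty)=1\cdot(ca)^n(1^\infty)$. If $n$ is odd then $(ca)^n$ is active at the root, so $(ca)^n(1^\infty)$ begins with $0$ and $g^n(1^\infty)$ begins $10\ldots\ne1^\infty$. If $n\equiv2\pmod4$, write $n=2m$ with $m$ odd; then $(ca)^{2m}=\bigl((ca)^2\bigr)^m$ gives $g^n(1^\infty)=1\cdot1\cdot(ab)^m(1^\infty)$, and since $m$ is odd $(ab)^m$ is active at the root, so $g^n(1^\infty)$ begins $110\ldots\ne1^\infty$. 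If $4\mid n$, the same substitutions together with $(ab)^{n/2}(1^\infty)=1\cdot g^{n/2}(1^\infty)$ yield $g^n(1^\infty)=111\cdot g^{n/2}(1^\infty)$; since $1\le n/2<n$, the inductive hypothesis gives $g^{n/2}(1^\infty)\ne1^\infty$, so this word contains a letter $0$, and hence so does $g^n(1^\infty)$. These three cases exhaust all $n\ge1$.

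The section computations are routine once the composition convention is fixed, and the induction is short; the real content is the preliminary identity $b^2=e$. Without it the sections of $(ca)^2$ are $b^2ab$ and $abb^2$ rather than $ab$, and the clean recursion $g^n(1^\infty)=111\cdot g^{n/2}(1^\infty)$ collapses, so I expect proving that the three generators square to the identity --- via the closed-set-of-root-inactive-sections argument above --- to be the main obstacle. Everything after that (spotting the cycle $bc\leadsto ca\leadsto ab\leadsto bc$ and running the induction) is mechanical.
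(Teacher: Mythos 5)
Your proof is correct --- I checked the generator data against the Moore diagram, the closed-under-sections argument giving $a^2=b^2=c^2=e$, the sections $(bc)\rest{1}=ca$, $(ca)\rest{1}=b^2$, $(ca)\rest{0}=ab$, $(ab)\rest{0}=(ab)\rest{1}=bc$, and all three cases of the strong induction; everything closes. It is, however, a genuinely different argument from the paper's. The paper makes the same first reduction, $(bc)^n(1^\infty)=1\,(ca)^n(1^\infty)$, and likewise needs the generators to be involutions (it displays the automaton generated by the squares, which is your closed-set argument in pictorial form); but after that it proves, by induction, the explicit statements $(ca)^{4n}(111)=111$, $(ca)^{2^n}\rest{111}=(ca)^{2^{n-1}}$ and $(ca)^{2^n}(1^\infty)=(111)^{n-1}(1010)1^\infty$, and deduces the lemma because these images are pairwise distinct, so the $\langle ca\rangle$-orbit of $1^\infty$ is infinite and no positive power can fix it. You instead run a $2$-adic recursion directly on $(bc)^n$: odd $n$ and $n\equiv 2\pmod{4}$ are excluded by root activity of $(ca)^n$, resp.\ $(ab)^{n/2}$, while $4\mid n$ reduces to $n/2$ via $(bc)^n(1^\infty)=111\,(bc)^{n/2}(1^\infty)$, coming from the section cycle $bc\to ca\to ab\to (bc)^2$. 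What your route buys is uniformity: every exponent is handled directly, with no passage to the subsequence $2^n$ and no orbit-size argument needed to upgrade ``no power of two fixes $1^\infty$'' to the full statement (the step the paper compresses into ``it's clear''). What the paper's route buys is explicit information about the images $(ca)^{2^n}(1^\infty)$, in the spirit of the orbit computations it uses for the other automata. (Incidentally, your section data for the squares is the correct one: in the paper's figure for the squared automaton the $0|0$-edge from $c^2$ should end at $b^2$ rather than $a^2$; this affects neither argument.)
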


\begin{proof}
Since $bc(1)=1$ and $(bc)\rest{1}=ca$, we have
$(bc)^n(1^\infty)=1(ca)^n(1^\infty)$ and it suffices to show that 
$(ca)^n(1^\infty)\neq 1^\infty$. 
We show  that for $n\ge 2$
\begin{enumerate}
\item $(ca)^{4n}(111)=111$ and $(ca)^{4n}(110)=110$
\item $(ca)^{2^n}\rest{111}=(ca)^{2^{n-1}}$ 
\item $ (ca)^{2^n}(1^\infty)=(111)^{n-1}(1010)1^\infty$ 
\end{enumerate}

It's clear that the third claim implies that $(ca)^n(1^\infty)\neq 1^\infty$ for all $n\ge 1$.

The first claim follows from $(ca)^4(111)=111$ and $(ca)^4(110)=110$.

For the second claim, note first that $a$, $b$ and $c$ all have order 2, as can be seen from
the automaton for $\<a^2,b^2,c^2\>$.
\medskip

\tikzset{every state/.style={minimum size=2em}} 
\begin{tikzpicture}[->,>=stealth',shorten >=1pt,auto,node distance=3.2cm,
                    semithick]

  \node[state] (A)                    {$a^2$};
  \node[state]         (C) [above right of=A] {$c^2$};
  \node[state]         (B) [below right of=C] {$b^2$};

  \path (A) edge  [bend left=10] node {\oo} (B)           
                 edge  [bend right=10]            node [swap] {\zz} (B)
        (B) edge  [bend left=10] node    {\zz} (C)
            edge     [bend right=10]         node [swap]  {\oo} (C)
         (C) edge [bend right=10] node [swap]  {\zz} (A)
           (C) edge [bend left=10] node {\oo} (A);
        \end{tikzpicture}

Then $(ca)\rest{111}=aa=1$ and $(ca)\rest{110}=bb=1$.
Also $$(ca)^2\rest{111}=(ca)\rest{ca(111)}(ca)\rest{111}=(ca)\rest{011}(ca)\rest{111}=ca$$ 
$$
(ca)^4\rest{111}=(ca)^2\rest{(ca)^2(111)}(ca)^2\rest{111}=(ca)^2\rest{101}ca=caaaca=caca
$$
Inductively, for $n\ge 3$, 
\begin{align*}
(ca)^{2^{n}}\rest{111}&=((ca)^{2^{n-1}}(ca)^{2^{n-1}})\rest{111}
=(ca)^{2^{n-1}}\rest{(ca)^{2^{n-1}}(111)}(ca)^{2^{n-1}}\rest{111} \\
&=(ca)^{2^{n-1}}\rest{111}(ca)^{2^{n-2}} %
=(ca)^{2^{n-2}}(ca)^{2^{n-2}}=(ca)^{2^{n-1}}\\
\end{align*}
For the third claim, note that $ca(1^\infty)=0(bb)(1^\infty)=01^\infty$ and 
\begin{align*}
(ca)^4(1^\infty)&=(ca)^4(111)(ca)^4\rest{111}(1^\infty)=111(ca)^2(1^\infty)\\
&=111101(ca)^2\rest{111}(1^\infty)=111101(ca)(1^\infty)=11110101^\infty
\end{align*}
Then for $n\ge 3$
\begin{align*}
(ca)^{2^{n}}(1^\infty)&=(ca)^{2^{n}}(111)(ca)^{2^{n}}\rest{111}(1^\infty)
=111(ca)^{2^{n-1}}(1^\infty)\\  
&=111(111)^{n-1}(1010)1^\infty=(111)^{n}(1010)1^\infty
\end{align*}

\end{proof}

\subsection{Automaton 920}
\mbox{}

\begin{minipage}{0.5\textwidth}
\begin{gather*}
b(1)=1 \\
 b\rest{1}=b
\end{gather*}
\end{minipage}
\parbox{0.4\textwidth}{
\tikzset{every state/.style={minimum size=2em}} 
\begin{tikzpicture}[->,>=stealth',shorten >=1pt,auto,node distance=3cm,
                    semithick]

  \node[state, fill=gray!40] (A)                    {$a$};
  \node[state]         (C) [above right of=A] {$c$};
  \node[state]         (B) [below right of=C] {$b$};

  \path (A) edge [bend left=10] node {\zo} (B)           
                 edge    [loop left]            node {\oz} (A)
        (B) edge   [bend left=10] node    {\zz} (A)
            edge       [loop right]           node {\oo} (A)
           (C) edge node {\oo} (A)
                 edge [loop above] node {\zz} (C);
        \end{tikzpicture}
}
\bigskip

To show that $b$ has infinite order, consider  the inverse automaton:

\tikzset{every state/.style={minimum size=1em}} 
\begin{tikzpicture}[->,>=stealth',shorten >=1pt,auto,node distance=3cm,
                    semithick]

  \node[state, fill=gray!40] (A)                    {$a^{-1}$};
  \node[state]         (C) [above right of=A] {$c^{-1}$};
  \node[state]         (B) [below right of=C] {$b^{-1}$};

  \path (A) edge [bend left=10] node {\oz} (B)           
                 edge    [loop left]            node {\zo} (A)
        (B) edge   [bend left=10] node    {\zz} (A)
            edge       [loop right]           node {\oo} (A)
           (C) edge node {\oo} (A)
                 edge [loop above] node {\zz} (C);
        \end{tikzpicture}

Since $a^{-1}\rest{1}=b^{-1}\rest{1}=b^{-1}$ and $b^{-1}(1)=1$, it follows that 
 $b^{-n}(01^\infty)\sim1^\infty$. But 
 $b(01^\infty)=0^\infty$, so $b$ has infinite order.

Note that according to \cite{N} this group is isomorphic to that of automaton $2401$
which is non-contracting.

\subsection{Automaton 969}
\mbox{}

\begin{minipage}{0.5\textwidth}
\begin{gather*}
c(0)=0\\c\rest{0}=c
\end{gather*}
\end{minipage}
\parbox{0.4\textwidth}{
\tikzset{every state/.style={minimum size=2em}} 
\begin{tikzpicture}[->,>=stealth',shorten >=1pt,auto,node distance=3.2cm,
                    semithick]

  \node[state, fill=gray!40] (A)                    {$a$};
  \node[state]         (C) [above right of=A] {$c$};
  \node[state]         (B) [below right of=C] {$b$};

  \path (A) edge  [bend left=10] node {\zo} (C)           
                 edge              node {\oz} (B)
        (B) edge   [bend left=10] node    {\zz} (C)
            edge      [bend right=10]        node [swap] {\oo} (C)
           (C) edge [bend left=10] node {\oo} (A)
                 edge [loop above] node {\zz} (C);
        \end{tikzpicture}
}

That $c$ has infinite order follows from the next lemma.

\begin{lem}
For $n\ge1$, $c^n((101)^\infty)\sim\begin{cases} (100)^\infty & n \text{ even}\\ (011)^\infty & n \text{ odd}\end{cases}$
\end{lem}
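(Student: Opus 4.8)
The plan is to compute the action of $c$ on the relevant eventually periodic strings directly, using the self-similarity formulas, and to keep track of everything up to left-shift equivalence. The key structural facts I would extract first from the Moore diagram of automaton 969 are the restrictions and letter-actions of the generators on the short words $0$, $1$, and the words $01$, $10$, $11$, $100$, $101$, $011$; in particular I expect $c(0)=0$ with $c\rest{0}=c$ (already given), together with enough data to see how $c$ acts on one period of $(101)^\infty$. Since $c$ is an automorphism of the rooted binary tree, $c((101)^\infty)$ will again be eventually periodic, and the point is to identify which period class it lands in and what the new restriction of $c$ (or a suitable power) is at the end of the transient part.

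The main step is an induction on $n$. For the base case $n=1$ I would compute $c((101)^\infty)$ explicitly: write $(101)^\infty = 101\,(101)^\infty$, apply $c(101) = ?$ and $c\rest{101} = ?$ read off the diagram, and iterate until the tail stabilises into one of $(100)^\infty$ or $(011)^\infty$, establishing $c((101)^\infty)\sim(011)^\infty$ (the odd case). For the inductive step I would show that $c$ swaps the two classes: that $c((100)^\infty)\sim(011)^\infty$ and $c((011)^\infty)\sim(100)^\infty$, again by the same explicit prefix-peeling computation, using that $c\rest{0}=c$ so that once the string begins with a long block of $0$'s the action is controlled. Combining $c^n((101)^\infty)\sim(011)^\infty$ for $n$ odd with the swap gives the even case, and vice versa; this is a routine parity bookkeeping once the two base computations are in hand.

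The genuine obstacle is purely computational: one must verify that the transient prefixes really do resolve into the claimed periodic tails, i.e.\ that the sequence of restrictions of $c$ along the orbit is eventually periodic with period dividing the relevant block length, so that the $\sim$-class is well-defined and does not drift. Concretely, I expect to need identities such as $a$, $b$, $c$ having order $2$ (checkable from the automaton of $\langle a^2,b^2,c^2\rangle$, as was done for automata 882 and 887) to collapse accumulated strings of generators on the periodic tail back to a single controlling element; without such a collapse the computation would not terminate. Once the equivalence $c^n((101)^\infty)\sim(100)^\infty$ or $(011)^\infty$ is established, $c$ has infinite order because $(100)^\infty \not\sim (011)^\infty$ (distinct left-shift classes) and neither equals the class of a string fixed by all powers would have to be — more simply, $c^n((101)^\infty)$ is never equal to $(101)^\infty$ itself since $(101)^\infty\not\sim(100)^\infty$ and $(101)^\infty\not\sim(011)^\infty$ — so $c^n\neq\mathrm{id}$ for all $n\ge 1$, and the criterion of Lemma~\ref{lem:criterion} applies with $g=c$ and $v=0$.
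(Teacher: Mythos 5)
Your skeleton for the lemma --- one explicit computation on $(101)^\infty$, a claim that $c$ swaps the two shift classes, then parity bookkeeping --- is the same strategy as the paper's. The problem is in how you propose to make the swap step close up. What you actually need is not $c((100)^\infty)\sim(011)^\infty$ and $c((011)^\infty)\sim(100)^\infty$ for the specific representatives, but the class statement: if $u\sim(100)^\infty$ then $c(u)\sim(011)^\infty$, and vice versa. The point is that $c^n((101)^\infty)$ is only shift-equivalent to a representative, so it carries an uncontrolled finite prefix $u'$, and $c(u'w)=c(u')\,c\rest{u'}(w)$ where $c\rest{u'}$ could be any of $a,b,c$. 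The fact that rescues this --- and is the entire content of the paper's proof --- is that $x\rest{10}=c$ for every generator $x$ (check: $a\rest{10}=b\rest{0}=c$, $b\rest{10}=c\rest{0}=c$, $c\rest{10}=a\rest{0}=c$); since the block $10$ occurs infinitely often in either periodic tail, the action on the tail is eventually governed by $c$ itself, and one short computation per class finishes the swap. Neither of the controls you anticipate is available for automaton 969: the tails $(100)^\infty$ and $(011)^\infty$ contain no long blocks of $0$'s, so $c\rest{0}=c$ gives no handle on the prefix; and the generators do not all have order $2$ --- $b$ is inactive with $b\rest{0}=b\rest{1}=c$, so $b^2=1$ would force $c^2=1$, contradicting the infinite order of $c$ that this subsection is establishing (the order-$2$ collapse belongs to automata 882 and 887). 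So as written the inductive step has a genuine gap, filled in the paper by the single observation $x\rest{10}=c$.

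Two further points. Your closing infinite-order argument is wrong: $(101)^\infty$ and $(011)^\infty$ are the \emph{same} shift class, since $(101)^\infty=1\,(011)^\infty$, so the lemma does not show $c^n((101)^\infty)\neq(101)^\infty$; for half the exponents the image lies exactly in that class. The paper instead notes $c^{-1}((101)^\infty)=1^\infty$, which lies in neither of the two classes, so no power $c^{m-1}$ ($m\ge 2$) can equal $c^{-1}$. Finally, had you carried out your base computation you would find $c((101)^\infty)=1\,1\,c((110)^\infty)=11\,(100)^\infty$, i.e.\ $n=1$ lands in the $(100)$-class; the parity labels in the lemma's displayed statement are swapped relative to this computation (immaterial for the application, since the orbit merely alternates between the two classes), but your unverified assertion that $n=1$ gives $(011)^\infty$ would not survive the calculation.
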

\begin{proof}
Note that $c((101)^\infty)=11c((110)^\infty)=11(100)^\infty$.
If $u\sim(100)^\infty$, then $c(u)\sim(011)^\infty$. If $u\sim(011)^\infty$, then $c(u)\sim(100)^\infty$.
Both statements  follow from the observation that for any generator $x\in\{a,b,c\}$, $x\rest{10}=c$.
\end{proof}

Finally, observe that $c^{-1}((101)^\infty)=1^\infty$.  
This together with the lemma imply that $c$ has infinite order.

\subsection{Automaton 2361}
\mbox{}

\begin{minipage}{0.5\textwidth}
\begin{gather*}
c(0)=0\\c\rest{0}=c
\end{gather*}
\end{minipage}
\parbox{0.4\textwidth}{
\tikzset{every state/.style={minimum size=2em}} 
\begin{tikzpicture}[->,>=stealth',shorten >=1pt,auto,node distance=3cm,
                    semithick]

  \node[state, fill=gray!40] (A)                    {$a$};
  \node[state]         (C) [above right of=A] {$c$};
  \node[state, fill=gray!40]         (B) [below right of=C] {$b$};

  \path (A) edge  [bend left=10] node {\zo} (C)           
                 edge  [loop left]            node {\oz} (A)
        (B) edge   [loop right] node    {\zo} (B)
            edge                      node [swap] {\oz} (A)
           (C) edge [bend left=10] node {\oo} (A)
                 edge [loop above] node {\zz} (C);
        \end{tikzpicture}
}

\mbox{}

Observe that
$
a(0^\infty)=10^\infty$ and $  c(0^\infty)=0^\infty
$.
Therefore, for all $n\ge 0$, $c^n(10^\infty)\sim0^\infty$.
Also, $c^{-1}(10^\infty)=1^\infty$. It follows that $c$ has infinite order.

Note that according to \cite{M} this group is isomorphic to that of automaton $939$
which is non-contracting.

\subsection{Automaton 2365}
\mbox{}

\begin{minipage}{0.5\textwidth}
\begin{gather*}
c(0)=0\\c\rest{0}=c
\end{gather*}
\end{minipage}
\parbox{0.4\textwidth}{
\tikzset{every state/.style={minimum size=2em}} 
\begin{tikzpicture}[->,>=stealth',shorten >=1pt,auto,node distance=3cm,
                    semithick]

  \node[state, fill=gray!40] (A)                    {$a$};
  \node[state]         (C) [above right of=A] {$c$};
  \node[state, fill=gray!40]         (B) [below right of=C] {$b$};

  \path (A) edge  [bend left=10] node {\oz} (C)           
                 edge  [loop left]            node {\zo} (A)
        (B) edge   [loop right] node    {\zo} (B)
            edge         node [swap] {\oz} (A)
           (C) edge [bend left=10] node {\oo} (A)
                 edge [loop above] node {\zz} (C);
        \end{tikzpicture}
}
\bigskip

To see that $c$ has infinite order, observe that
$
a^{-1}(0^\infty)=10^\infty$ and $c^{-1}(0^\infty)=0^\infty$.
Therefore, for all $n\ge 0$, $c^{-n}(10^\infty)\sim0^\infty$.
As $c(10^\infty)=1^\infty$, it follows that $c$ has infinite order.

Note that according to \cite{M} this group is isomorphic to that of automaton $939$
which is non-contracting.  

\subsection{Automaton 2402}
\mbox{}

\begin{minipage}{0.5\textwidth}
\begin{gather*}
c(0)=0\\c\rest{0}=c
\end{gather*}
\end{minipage}
\parbox{0.4\textwidth}{
\tikzset{every state/.style={minimum size=2em}} 
\begin{tikzpicture}[->,>=stealth',shorten >=1pt,auto,node distance=3cm,
                    semithick]

  \node[state, fill=gray!40] (A)                    {$a$};
  \node[state]         (C) [above right of=A] {$c$};
  \node[state, fill=gray!40]         (B) [below right of=C] {$b$};

  \path (A) edge  [bend left=10] node {\oz} (C)           
                 edge              node {\zo} (B)
        (B) edge   [loop right] node    {\oz} (B)
            edge            node [swap] {\zo} (C)
           (C) edge [bend left=10] node {\oo} (A)
                 edge [loop above] node {\zz} (C);
        \end{tikzpicture}
}
\medskip

Note that
$c^n(10^\infty)\sim 0^\infty$ since $x\rest{00}=c$ for any $x\in\{a,b,c\}$.
However $c^{-2}(10^\infty)=101^\infty$.
Therefore $c$ has infinite order.

\subsection{Automaton 2427}
\mbox{}

\begin{minipage}{0.5\textwidth}
\begin{gather*}
c(0)=0\\c\rest{0}=c
\end{gather*}
\end{minipage}
\parbox{0.4\textwidth}{
\tikzset{every state/.style={minimum size=2em}} 
\begin{tikzpicture}[->,>=stealth',shorten >=1pt,auto,node distance=3cm,
                    semithick]

  \node[state, fill=gray!40] (A)                    {$a$};
  \node[state]         (C) [above right of=A] {$c$};
  \node[state, fill=gray!40]         (B) [below right of=C] {$b$};

  \path (A) edge  [bend left=10] node {\zo} (C)           
                 edge              node {\oz} (B)
        (B) edge   [bend right=10] node  [swap]   {\oz} (C)
            edge      [bend left=10]      node  {\zo} (C)
           (C) edge [bend left=10] node {\oo} (A)
                 edge [loop above] node {\zz} (C);
        \end{tikzpicture}
}
\medskip

To see that $c$ has infinite order note that
$$
a((101)^\infty)=01(101)^\infty \quad b((101)^\infty)=00(101)^\infty \quad\text{and}\quad c((101)^\infty)=11(101)^\infty.
$$
Therefore, for all $n\ge 1$, $c^n((101)^\infty\sim(101)^\infty$.
However, $c^{-2}((101)^\infty)=(100)^\infty$.

\section{Non-hyperbolic self-similarity graphs}\label{sec:hyperbolic}

Nekrashevych introduced the notion of a self-similarity graph of a self-similar action. 
He proved that if a self-similar group is contracting, the corresponding self-similarity graph (endowed with the natural metric) is hyperbolic.
The converse to this result is open.

Here we provide a partial converse to this fact, which applies to self-similar actions that satisfy the criterion of Lemma~\ref{lem:criterion}.
We do not know of a non-contracting self-similar group that does not satisfy the criterion. 

\begin{dfn}[\cite{N} Defn. 3.7.1]
The \emph{self-similarity graph} $\Sigma(G)$ of  a self-similar group $G$ with generating set $S$ acting on $X^{*}$  is the graph with vertex set $X^{*}$ and an  edge $\{u,v\}$ whenever:
\begin{itemize}
\item $u=s(v)$  for some $s\in S$;  these are the {\em horizontal edges};
\item $u=xv$ for some $x\in X$; these are the {\em vertical edges}.
\end{itemize}
\end{dfn}
Observe that horizontal edges connect  strings in $X^*$ of the same length, vertical edges connect 
strings that differ in length by 1.

We use the characterization of hyperbolic geodesic metric spaces involving the divergence of geodesics,
see \cite[p.412]{BH}. 
\begin{prop}
Let $Y$ be a geodesic metric space. A function $e:\N\to\R$ is called a \emph{divergence function }
if for all $y\in Y$, for all $R,r\in\N$
and for all geodesics $\alpha:[0,a]\to Y$ and $\beta:[0,b]\to Y$ with
$\alpha(0)=\beta(0)=y$, $a>R+r$  and $b>R+r$ the following holds:
if $d_Y(\alpha(R),\beta(R))>e(0)$ then any path from $\alpha(R+r)$ to $\beta(R+r)$ that 
stays outside the open ball of radius $R+r$ about $y$ has length at least $e(r)$.
Then $Y$ is hyperbolic if it admits an exponential divergence function.
\end{prop}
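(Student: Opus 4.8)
The result is a reformulation of the divergence characterisation of hyperbolicity recorded in \cite[Ch.~III.H]{BH}, so the quickest route is to cite it there; the plan below sketches the argument for the implication that is actually needed, namely that the existence of an exponential divergence function forces $Y$ to be a hyperbolic geodesic space. The target is the slim‑triangles (Rips) condition: I aim to produce a constant $\delta=\delta(e)$ so that in every geodesic triangle of $Y$ each side lies in the $\delta$‑neighbourhood of the union of the other two. Since this is one of the standard equivalent definitions of hyperbolicity for a geodesic space (see again \cite[Ch.~III.H]{BH}), establishing it finishes the proof.

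I would argue by contradiction. If no such $\delta$ works, then for arbitrarily large $D$ there is a geodesic triangle $pqr$ and a point $x_0\in[p,q]$ with $d\bigl(x_0,[q,r]\cup[p,r]\bigr)=D$. Along $[p,q]$ the function $x\mapsto d\bigl(x,[q,r]\cup[p,r]\bigr)$ is $1$‑Lipschitz and vanishes at $p$ and $q$, so there is a maximal subarc $[y,z]$ of $[p,q]$ through $x_0$ on which it exceeds $D/2$, whence $d(x_0,y),d(x_0,z)\ge D/2$. Regard the subgeodesics $[x_0,z]$ and $[x_0,y]$ of $[p,q]$ as geodesics $\alpha,\beta$ issued from the basepoint $x_0$, so that $d(\alpha(R),\beta(R))=2R$ for $R\le D/2$. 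Fix $R\in\N$ with $2R>e(0)$ and put $r=\lfloor D/2\rfloor-1-R$, so that $R+r<D/2$ and, with $a=d(x_0,z)$ and $b=d(x_0,y)$, all the hypotheses of the divergence condition hold. If one can join $\alpha(R+r)$ to $\beta(R+r)$ by a path $P$ that stays outside the open ball $B(x_0,R+r)$ and has length only $O(D)$, then the divergence hypothesis forces $\operatorname{length}(P)\ge e(r)$, a quantity exponential in $D$, and the choice of $D$ large yields the contradiction. A first candidate for $P$ runs along $[p,q]$ to the ends of the subarc, jumps to nearest points of $[q,r]\cup[p,r]$, and follows those sides in between; the endpoints and the opposite sides sit at distance $\ge D/2$ from $x_0$, and $d\bigl(\alpha(R+r),[q,r]\cup[p,r]\bigr)\le (R+r)+D<\tfrac{3}{2}D$, so the jumps are short.

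The main obstacle is to make $P$ simultaneously short \emph{and} disjoint from the forbidden ball. A nearest‑point jump, or a chord joining the two jump points, can dip inside $B(x_0,R+r)$; routing instead along $[q,r]\cup[p,r]$, or all the way out along $[p,q]$ to the ends of the subarc, stays outside the ball but costs as much as $d(p,r)+d(q,r)$ or $d(x_0,y)+d(x_0,z)$, for which there is no a priori bound in terms of $D$. This is exactly the step at which I would follow \cite{BH}: one passes to a suitably minimal fat configuration (and for proper $Y$ — such as the locally finite self‑similarity graphs of interest — a pointed rescaled limit of ever‑fatter triangles is also available), after which the relevant lengths become comparable to $D$, so that $\operatorname{length}(P)=O(D)$ can be secured while keeping $P$ outside the ball. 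Granting this reduction the estimate $O(D)<e(r)$ closes the contradiction, and the remainder is routine bookkeeping with the triangle inequality and the $1$‑Lipschitz distance‑to‑a‑set function; I expect the minimality/limiting step to be the only non‑routine ingredient.
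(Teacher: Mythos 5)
The paper does not actually prove this proposition: it is stated as the known divergence characterisation of hyperbolicity and attributed directly to \cite[p.412]{BH}, so your leading suggestion to simply cite Bridson--Haefliger is exactly the paper's approach. Your supplementary slim-triangles sketch is therefore not needed, and as you acknowledge it leaves the one genuinely hard step (producing a path of length $O(D)$ that stays outside the forbidden ball, via a minimality or limiting argument) to \cite{BH} anyway, which only reinforces that the citation is the right way to dispose of this statement.
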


\begin{prop}\label{lem:nonhyp}
 Let $G$ be a self-similar group with finite generating set $S$ acting on $X^*$, and 
suppose that there exist $g\in G$ and   $v\in X^*$ such that 
\begin{enumerate}
\item $g\rest{v}=g$
\item $g(v)=v$
\item $g$ has infinite order
\end{enumerate}
Then the self-similarity graph $\Sigma(G)$ is non-hyperbolic. 
\end{prop}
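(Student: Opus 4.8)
The plan is to exhibit an explicit family of geodesic bigons in $\Sigma(G)$ whose sides stay far apart but can only be joined, outside a large ball, by a very long path — contradicting the existence of any exponential (indeed any) divergence function, hence by the Proposition on divergence of geodesics $\Sigma(G)$ cannot be hyperbolic. The base vertex will be $y = \varnothing$, the empty word (the root of $X^*$). For each $n$, consider the vertex $v^n \in X^*$, of length $\ell(v^n) = n\,\ell(v)$, and compare the two vertices $v^n$ and $g(v^n)$. By hypothesis $g\rest{v}=g$ and $g(v)=v$, and the inductive argument already carried out in the proof of Lemma~\ref{lem:criterion} gives $g(v^n)=v^n$, which at first glance makes them equal; so instead I would compare, for a suitably chosen helper word, the pair of vertices obtained by reading $g$-translates down two distinct ``rays'' in the tree. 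Concretely: since $g$ has infinite order, there is a word $w \in X^*$ and $m \in \N$ with $g^m(w)\neq w$; extending $w$ to length $N = \ell(v^n)$ by a fixed suffix, I get two vertices $u_1 = $ (the length-$N$ prefix-adjusted word along the $v$-ray) and $u_2 = g^m(u_1)$ that are distinct but both at distance $N$ from the root along the vertical edges.

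The key steps, in order, are as follows. \textbf{Step 1:} Identify the two geodesics $\alpha,\beta$ from the root. The vertical edges make $X^*$ into a tree-like structure in which the unique reduced vertical path from $\varnothing$ to any word $x_1\cdots x_N$ has length $N$ and is geodesic; so $\alpha$ runs down to $v^n$ and $\beta$ runs down to a word $v'(n)$ which agrees with $v^n$ except that a bounded-length prefix has been replaced using the element $g$ and the auxiliary string $v'$ from the relevant subsection (the string ``not fixed by any power of $g$''). \textbf{Step 2:} Show the endpoints are joined by a short horizontal path: because $g\rest{v^k}=g$ for all $k$ and the generating set $S$ is finite, writing $g = s_1\cdots s_L$ as a word in $S$ gives a horizontal path of length $L$ (independent of $n$!) from $v^n$ to $g(v^n)$, and more generally a path of length $Lj$ realizing $g^j$. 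So after descending $N$ levels the two sides are a \emph{bounded} horizontal distance apart, giving, for $R=N$ and any $r$, points $\alpha(R+\cdot)$ near $\beta(R+\cdot)$ — wait, this is the wrong direction; I instead use Step 2 in reverse. \textbf{Step 3 (the real point):} Show that any path joining $\alpha$ to $\beta$ that is forced to \emph{stay outside} the ball of radius $R+r$ about $\varnothing$ must be long. A path staying outside $B(\varnothing, R+r)$ lives entirely among words of length $> R+r$; horizontal edges there correspond to applying generators $s$ with $u = s(v)$, and following such a path and projecting via ``take the length-$(R+r+1)$ prefix'' shows the path must realize, purely by generators, a group element taking one length-$(R+r)$-ish word to another. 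The obstruction to doing this cheaply is precisely that $g$ has infinite order with $g\rest v = g$: any short horizontal connection at depth $R+r$ would, upon restricting, exhibit a short word in $S$ equal to a large power of $g$, which is impossible since $g$ has infinite order and $S$ is finite (only finitely many group elements are represented by $S$-words of bounded length). Making this quantitative forces the joining path to have length growing with $r$ without any exponential upper bound being available — in fact one shows no divergence function $e$ can exist, since the needed lower bound on path length can be made to exceed $e(r)$ for the fixed $e$ by choosing $n$ large.

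Carrying this out cleanly, the cleanest route is actually the ``alternatively'' remark already made: invoke the contrapositive. If $\Sigma(G)$ were hyperbolic, it would admit an exponential divergence function; but the vertices $v^n$ and $g^{j}(v^n)$ for $0 \le j \le J$ form, for each fixed $n$, a ``horizontal segment'' of length $\asymp J$ sitting at depth $n\ell(v)$, all of whose vertices restrict back to the same configuration at every deeper level $v^n$-prefixed (because $g\rest{v^n}=g$), so this horizontal segment is ``replicated'' at all depths $\ge n\ell(v)$; a geodesic bigon whose two sides descend to the two ends of this segment then has the property that the segment is the only efficient connection and it cannot be pushed outward to stay outside a large ball except by paying a length proportional to $J$, independent of how far out we go — that is, the divergence of these geodesics is \emph{linear, not exponential}, for every $n$, and by letting $J\to\infty$ we defeat any candidate exponential function. \textbf{The main obstacle} I anticipate is Step 3: rigorously lower-bounding the length of a path that is constrained to stay outside the ball. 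One must argue that such a path cannot ``shortcut'' through the self-similar structure, and the right tool is to project the path to a fixed deep level by the restriction maps $g \mapsto g\rest{v^k}$ (using property (1)) and observe that the projected path still connects the two distinct images and still consists of horizontal edges, so it is no shorter than the minimal $S$-word length of the group element $g^{j}$, which tends to infinity with $j$ precisely because $g$ has infinite order. Once that lemma is in place, assembling the contradiction with the divergence criterion from \cite[p.412]{BH} is formal.
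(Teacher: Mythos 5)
There is a genuine gap: the logic of your central step runs in the wrong direction. A divergence function asserts that \emph{if} two geodesics from $y$ are more than $e(0)$ apart at radius $R$, \emph{then} any path joining them outside the open ball of radius $R+r$ has length at least $e(r)$. To contradict its existence you must exhibit geodesics that are far apart (more than $e(0)$) at some radius $R$ and yet are joined outside the ball of radius $R+r$ by a \emph{short} path (shorter than $e(r)$). Your opening sentence and your Step 3 attempt the opposite: you argue that joining paths outside large balls must be \emph{long}, which is exactly what a divergence function predicts, so no contradiction can come from it, no matter how quantitative you make the lower bound. Ironically, your Step 2 contains the correct key observation --- since $g(v^k)=v^k$ and $g\rest{v^k}=g$, the vertices $v^kw$ and $g^n(v^kw)=v^kg^n(w)$ are joined, for every $k$, by a horizontal path of length at most $n||g||_S$, a bound independent of the depth $k$, and this path stays outside the open ball of radius $|v^kw|$ --- but you then discard it as ``the wrong direction''. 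The paper's proof is built on precisely that fact: the vertical geodesics from $\emptyset$ to $v^kw$ and to $v^kg^n(w)$ pass through $w$ and $g^n(w)$ at radius $|w|$ (note that vertical edges \emph{prepend} letters, so these geodesics run through suffixes, not prefixes as in your Step 1), they separate by more than $e(0)$ there, and their endpoints are reconnected far out by the bounded-length horizontal path; choosing $k$ with $e(|v^k|)>n||g||_S$ contradicts the divergence property.

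A second missing ingredient is the hypothesis $d(\alpha(R),\beta(R))>e(0)$ itself: choosing $w$ and $m$ with $g^m(w)\neq w$ only gives separation at least $1$, while the candidate function $e$ (hence $e(0)$) is arbitrary. The paper supplies this by observing that an infinite-order $g$ must have arbitrarily large orbits on $X^*$ (if every orbit had size at most $N$ then $g^{N!}$ would be trivial), and that vertices of $\Sigma(G)$ have uniformly bounded degree, so balls of a fixed radius contain boundedly many vertices; hence some $n$ and $w$ satisfy $d_\Sigma(w,g^n(w))>e(0)$. Your proposal never addresses this. Finally, your last paragraph collapses at the start because $g^j(v^n)=v^n$, so the ``horizontal segment'' between $v^n$ and $g^j(v^n)$ is degenerate, and your claim that a path avoiding the ball must consist of horizontal edges (so that it ``realizes'' a group element) is unjustified --- such a path may use vertical edges and travel deeper into the tree; in the correct argument this issue never arises because the short connecting path is only needed in one direction.
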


\begin{proof}
The vertex in $\Sigma(G)$ corresponding to the empty string is labelled $\emptyset$.
A vertex in the open ball based at $\emptyset$ of radius $N$ corresponds to a string in $X^*$ of length  less than $N$.
Note that an element  of $X^*$ uniquely defines a vertical geodesic emanating from $\emptyset$ whose length is equal to that of the word.
Considering such geodesics,
we show that $\Sigma(G)$ does not admit an exponential divergence function, and is therefore not hyperbolic.

Suppose for a contradiction that $e:\N\to\R$
is a divergence function for $\Sigma(G)$ and that it is increasing and unbounded.
If the maximum size of an orbit of any $w\in X^*$ under  $g$ was $N$, then $g^{N!}(w)=w$ for all $w\in X^*$. 
Since $g$ has infinite order, it follows that there are arbitrarily large orbits under its action on $X^*$.
Vertices in  
$\Sigma(G)$ have uniformly bounded  degree. 
It follows that there is a bound on the number of vertices in any metric ball of fixed radius, so  we can  choose $n \in \mathbb{N}$ and $w \in X^{*}$ such that $d_\Sigma(w,g^n(w))> e(0)$. 

For all $k\in\N$ the vertices $v^kw$ and $g^n(v^kw)=v^kg^n(w)$ are connected by a horizontal path of length exactly $n||g||_S$, and this path
lies outside the open ball of radius $|v^kw|$ 
 centered at $\emptyset$. Choose $k\in\N$ such that $e(|v^k|)>n||g||_S$.
 Since $e$ is a divergence function, any horizontal path connecting 
 $v^kw$ and $v^kg^n(w)$ must have length at least $e(|v^k|)$. This contradiction establishes the result.
\end{proof}

\printbibliography
\end{document}